\theoremstyle{plain}
\newtheorem{theorem}{Theorem}
\newtheorem{lemma}{Lemma}
\newcommand{\re}[1]{(\ref{#1})}
\begin{document}

\title{On the Cauchy problem for the wave equation \\ with data on the boundary}

\author{
M.~N. Demchenko\footnote{St.~Petersburg Department of
V.\,A.~Steklov Institute of Mathematics of
the Russian Academy of Sciences, 
27 Fontanka, St.~Petersburg, Russia.
The research was supported by the RFBR grant 17-01-00529-a. }}

\date{}

\maketitle

\begin{abstract}
We consider the Cauchy problem for the wave equation
in $\Omega\times{\mathbb R}$
with data given on some part of the boundary $\partial\Omega\times{\mathbb R}$.
We provide a reconstruction algorithm for this problem
based on analytic expressions.
Our result is applicable to the problem of determining nonstationary wave field arising in
geophysics, photoacoustic tomography, tsunami wave source recovery.

\smallskip

\noindent \textbf{Keywords:} 
wave equation, Cauchy problem, wave field recovery, 
photoacoustic tomography.
\end{abstract}

\medskip

\section{Introduction}\label{intro}
Consider the wave equation
\begin{equation}
  \partial^2_t u - \Delta u = 0  \label{waveq}
\end{equation}
in a space-time cylinder $\Omega\times{\mathbb R}$
($\Omega$ is a domain in ${\mathbb R}^n$, 
$n\geqslant 2$,
$\Delta$ is the Laplace operator in ${\mathbb R}^n$).
We study the problem of determining a solution $u$
from Cauchy data $u$, $\partial_\nu u$
($\nu$ is the outward unit normal to $\partial\Omega$) 
given on some part of the boundary
$\partial\Omega\times{\mathbb R}$.
In contrast to the classical Cauchy problem for the
wave equation with the data on a space-like surface,
the problem in consideration is ill-posed~\cite{is, LRSh}.
However, solution $u$ is uniquely determined
in some part of $\Omega\times{\mathbb R}$ depending on the set,
on which the Cauchy data are given.
This can be inferred from the unique continuation property
for the wave equation,
which is provided
by Holmgren's theorem
(or Tataru's theorem~\cite{Tat} in case of a hyperbolic equation with variable coefficients).

We will denote points in ${\mathbb R}^n$ by $(x,y)$, where
$x=(x_1,\ldots,x_{n-1})\in{\mathbb R}^{n-1}$, $y\in{\mathbb R}$.
We will consider the case when $\Omega$ is a subgraph
of a $C^\infty$-smooth function $Y(x)$ that satisfies a 
certain growth condition:
\begin{equation}\label{subgraph}
  \Omega = \{(x,y)\,|\, x\in{\mathbb R}^{n-1},\,\, -\infty < y < Y(x) \},
  \quad |Y(x)| \leqslant C_1 + C_2 |x|, \,\, C_2<1
\end{equation}
(see fig.~\ref{figcone}). 
\begin{figure}[b!]\centering
  \includegraphics[width=.5\textwidth]{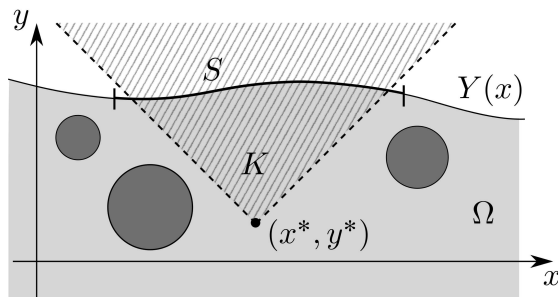}
  \caption{The domain $\Omega$ is a subgraph
of the function $Y(x)$. The darkened region in $\Omega$ is the set $\omega$ containing scatterers and inhomogeneities.
The hatched region is the cone $K$.}
  \label{figcone}
\end{figure}
In case $Y(x)\equiv{\rm const}$ the domain $\Omega$ is a half-space.
Fix a point $(x^*,y^*)\in \Omega$ and
$t^*\in{\mathbb R}$.
We will obtain an algorithm (formula~\re{formula}, 
sec.~\ref{determ}),
which allows determining $u(x^*,y^*,t^*)$
from the Cauchy data on the set
\begin{equation}\label{Cauchy_set}
  \{ (x,y,t)\,|\, (x,y)\in S,\,\, T_-(x)\leqslant t\leqslant T_+(x)\}.
\end{equation}
Here $T_\pm(x) = t^*\pm(Y(x)-y^*)$,
$S$ is a bounded relatively open subset of $\partial\Omega$ that contains the intersection
$K\cap \partial\Omega$, where
\begin{equation}
  K = \{(x,y)\in{\mathbb R}^n\,|\,\, y-y^* \geqslant |x-x^*|\}
  \label{cone}
\end{equation}
(note that the intersection $K\cap\partial\Omega$ is itself bounded
due to the growth condition in~\re{subgraph}).
The cone $K$ and the set $S$ are shown on fig.~\ref{figcone}.
Evidently, $Y(x) > y^*$ whenever $(x,y)\in K\cap\partial\Omega$.
We assume that $S$ is chosen in such a way that the same inequality
holds true whenever $(x,y)$ belongs to the closure $\overline S$, which leads to the inequality 
$T_-(x) < T_+(x)$. Thus the condition on $t$ in~\re{Cauchy_set} makes sense.

Note that formula~\re{formula} allows determining $u(x^*,y^*,t^*)$ from Cauchy data on various subsets of the boundary.
Indeed, one can take a cone $K$ 
with a different orientation than that in~\re{cone} 
(though its vertex should be in $(x^*,y^*)$),
which yields a different set $S\subset\partial\Omega$.
Next we may choose a Cartesian coordinate system in ${\mathbb R}^n$, in which $K$ is represented by~\re{cone}, and then apply formula~\re{formula}.
However, it is required that the domain $\Omega$ can be
represented by~\re{subgraph} in chosen coordinates.
Recall that in Cauchy problems for elliptic equations
and in the analytic continuation problem,
the solution is also uniquely determined from Cauchy data 
on various sets.

The wave equation~\re{waveq} describes wave processes
of various nature in homogeneous media.
Thus our result can be used for determination
of the wave field in a domain from boundary measurements. 
It will be shown in the end of sec.~\ref{determ}
that our algorithm is applicable also in case when 
the solution
$u(x,y,t)$ is defined for $(x,y)$ belonging to
some subset of $\Omega$.
This may correspond, for example, to
the wave process
in the medium that occupies the domain $\Omega$ and contains inhomogeneities and scatterers
located in some set $\omega$
(see fig.~\ref{figcone}).
Under the condition $\omega\cap K=\emptyset$,
formula~\re{formula} can be used to find the wave field
in homogeneous part of the medium (i.e. in $\Omega\setminus\omega$)
without knowledge
of structure of inhomogeneities and scatterers.

The problem of determination of nonstationary wave field
arises in various applications, such as
geophysics~\cite{Kab}, photoacoustic tomography~\cite{Kruger, Nat}, tsunami wave source recovery~\cite{vorchev},
and coefficient inverse problems~\cite{bel, mn3, mn5}.

The Cauchy problem for hyperbolic equations with data
on the boundary has been extensively studied. 
We refer the reader to monographs~\cite{is, LRSh}
containing overview of related results.
Most of these results are Carleman type estimates,
which provide uniqueness of solution 
and conditional stability estimates.
In case of the wave equation with constant coefficients,
reconstruction algorithms based on analytic expressions were obtained.
However, most of them require the nonlocal Cauchy data,
which means that the surface $S$ coincides with the entire boundary $\partial\Omega$~\cite{FPR, FHR, Nat, Quintoetal, Blag}.
As to the case of local data, we mention
the famous result of R.~Courant
on the Cauchy problem for ultrahyperbolic equation
in a half-space
(see~\cite{K}),
and papers~\cite{Pal, mn6}, in which two-dimensional domains were considered.

The author is thankful to prof.~M.~I.~Belishev and prof.~A.~P.~Kiselev for helpful discussions.

\section{Special solution of Laplace equation}
The proof of our main result 
will be based on some special solution of the wave equation,
which depends on a small parameter $h$ and enjoys a certain
localization property as $h\to 0$.
To construct such solution, we will need a special
solution of Laplace equation, which is the subject
of this section.

We will use the notation $\Delta_x = \sum_j \partial^2_{x_j}$
(thus for Laplace operator $\Delta$ in ${\mathbb R}^n$ we have
$\Delta = \Delta_x + \partial^2_{y}$).
Consider the following Cauchy problem for Laplace equation in ${\mathbb R}^n$:
\begin{gather} 
  \Delta \varphi = 0, \label{helmholtz}\\
  \varphi|_{y=0} = \frac{e^{-x^2/h}}{(\pi h)^m}, \quad
  \partial_y \varphi|_{y=0} = 0, \label{helmholtz_cauchy}
\end{gather}
where $h>0$, $m=(n-1)/2$, and
for real or complex vector
$\zeta= (\zeta_1,\ldots,\zeta_{n-1})$ we put
$\zeta^2 = \sum_j \zeta_j^2$.
Note that $\varphi(x,0)$ is the Gaussian distribution in ${\mathbb R}^{n-1}$.

\begin{lemma}\label{ell_lemma}
For any $h>0$ there is a unique
$C^\infty$-smooth function $\varphi(x,y)$ in ${\mathbb R}^n$
that satisfies equation~\re{helmholtz} and initial
conditions~\re{helmholtz_cauchy}.
Besides, 
\begin{equation}
  \varphi, \partial_{x,y} \varphi, \partial^2_{x,y} \varphi \to 0,
  \quad h\to 0, \quad
  \textrm{if } |x|>|y|;
  \label{vphilim}
\end{equation}
the convergence is uniform on every compact set in
$\{|x|>|y|\}$.
\end{lemma}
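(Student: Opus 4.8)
The plan is to solve the Cauchy problem \re{helmholtz}--\re{helmholtz_cauchy} explicitly by the Fourier transform in the tangential variable $x$, and then to analyze the resulting integral by the saddle-point / contour-shift method to extract the localization \re{vphilim}.

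First I would apply the Fourier transform in $x$. Since the Gaussian $e^{-x^2/h}/(\pi h)^m$ transforms (up to normalization) to another Gaussian $e^{-h\xi^2/4}$, and the ODE $-|\xi|^2\hat\varphi + \partial_y^2\hat\varphi = 0$ with $\partial_y\hat\varphi|_{y=0}=0$ has the even solution $\cosh(|\xi| y)$, one gets
\begin{equation*}
  \varphi(x,y) = c_n\int_{{\mathbb R}^{n-1}} e^{i x\cdot\xi}\, e^{-h\xi^2/4}\cosh(|\xi|\,y)\, d\xi
\end{equation*}
for an explicit constant $c_n$. The factor $e^{-h\xi^2/4}$ makes the integrand Schwartz in $\xi$ for every fixed $(x,y)$, so the integral converges absolutely and may be differentiated under the integral sign arbitrarily often; this yields existence of a $C^\infty$ solution, and standard elliptic (or Fourier-uniqueness) arguments give uniqueness. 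The smoothness and the verification of \re{helmholtz}, \re{helmholtz_cauchy} are routine; I would state them briefly.

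The substance is the limit \re{vphilim}. Writing $\cosh(|\xi|y) = \tfrac12(e^{|\xi|y}+e^{-|\xi|y})$ and using the rotational structure, I would reduce to understanding, for $|x|>|y|$, an oscillatory-Gaussian integral whose phase (after passing to, say, the variable along $x$ and exploiting that $e^{h\xi^2/4}$ dominates $e^{|\xi||y|}$) is of the form $i x\cdot\xi \pm |\xi|y - \tfrac{h}{4}\xi^2$. The key point: complete the square, shifting the contour in $\xi$ into the complex domain to absorb the $ix\cdot\xi$ term. Because $|x|>|y|$, after the shift the real part of the exponent is bounded above by $-\delta(|x|,|y|)/h$ for some $\delta>0$ that is positive and locally uniform on compact subsets of $\{|x|>|y|\}$, while the $h$-dependent prefactors are only polynomial in $1/h$. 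Hence $\varphi\to 0$ exponentially fast as $h\to 0$, uniformly on such compacts; differentiating under the integral sign brings down extra polynomial factors in $\xi$ (hence in $1/h$ after the same rescaling), which are likewise beaten by the exponential, giving the same limit for $\partial_{x,y}\varphi$ and $\partial^2_{x,y}\varphi$.

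The main obstacle is making the contour-shift estimate clean in several tangential variables and tracking the constant $\delta$ so that uniformity on compact subsets of $\{|x|>|y|\}$ is manifest; the cleanest route is to reduce to one effective radial/longitudinal variable using rotational symmetry, or alternatively to write $\varphi$ via the Poisson-type representation $\varphi(x,y)=\tfrac12(v(x,y)+v(x,-y))$ where $v$ solves an initial-value problem that can be handled by analytic continuation of the heat/Gaussian kernel, turning \re{vphilim} into the statement that a shifted Gaussian $e^{-(x\pm iy e)^2/h}$-type kernel is exponentially small when $|x|>|y|$. Either way, the estimate is elementary once the square is completed; the care is entirely in the uniformity bookkeeping.
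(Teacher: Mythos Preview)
Your plan is sound and does lead to a proof, but it follows a genuinely different route from the paper's. You take the Fourier transform in $x$ to obtain $\hat\varphi(\xi,y)=e^{-h\xi^2/4}\cosh(|\xi|y)$ and then aim to deduce \re{vphilim} by a contour shift in $\xi$. The paper instead represents $\varphi$ through the fundamental solution $G$ of the \emph{wave} equation $\partial_y^2 G-\Delta_x G=0$ (with $y$ in the role of time),
\[
  \varphi(x,y)=\frac{1}{(\pi h)^m}\,\bigl\langle \partial_y G(x',y),\;e^{-(x-ix')^2/h}\bigr\rangle ,
\]
and checks harmonicity by observing that $\psi(x,y)=\varphi(ix,y)$ is an honest convolution of $e^{x^2/h}/(\pi h)^m$ with $\partial_y G$, hence solves the wave equation, and then continuing analytically back to real $x$. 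The payoff is in the decay estimate: since $\partial_y^{\beta} G(\cdot,y)$ is a compactly supported distribution with support in $\{|x'|\le|y|\}$ (finite propagation speed), and on that set $|e^{-(x-ix')^2/h}|=e^{(x'^2-x^2)/h}\le e^{(y^2-x^2)/h}$, a single seminorm bound on the pairing gives \re{vphilim} with the sharp region $\{|x|>|y|\}$ in one stroke, and the derivatives are handled identically. Your contour-shift route must contend with $\cosh(|\xi|y)=\cosh\bigl(y\sqrt{\xi^2}\bigr)$; although this is entire in $\xi$, the crude bound $|\cosh(y\sqrt{z})|\le e^{|y|\sqrt{|z|}}$ after a shift $\xi\mapsto\xi+i\eta$ yields decay only on a strictly smaller set (roughly $|x|>2|y|$), so reaching the sharp threshold genuinely requires the more careful radial reduction or region-splitting you flag as the ``main obstacle''. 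Your second alternative---a representation through complex-shifted Gaussians $e^{-(x-i\,\cdot\,)^2/h}$---is precisely the mechanism the paper exploits, so that branch of your outline is closest to what actually appears.
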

\begin{proof}
We will study Cauchy problem~\re{helmholtz}, \re{helmholtz_cauchy}, using a method, which is similar 
to that described in~\cite{Hor} (Lemma 9.1.4),
where Laplace equation is reduced to the wave equation.
This method provides a representation of the solution
$\varphi(x,y)$ in terms of the fundamental solution $G(x,y)$ 
of the wave equation.
The latter is defined by the following equalities:
\begin{gather}
  \partial^2_{y} G - \Delta_x G = 0, \label{fund1}\\
  G|_{y=0} = 0, \quad \partial_y G|_{y=0} = \delta(x) \label{fund2}
\end{gather}
($\delta$ is the Dirac delta function).
For any $y$, functions $\partial_y^\beta G(\cdot,y)$, 
$\beta\geqslant 0$,
are compactly supported distributions 
supported in the ball $\{x \,|\, |x|\leqslant |y|\}$.
The Fourier transform of $\partial_yG(\cdot,y)$ 
is equal to $\cos(y |\xi|)$.
Put
\begin{equation} \label{vphi}
  \varphi(x,y) = \frac{1}{(\pi h)^m}
  \left\langle \partial_y G(x',y), e^{-(x - i x')^2/h}\right\rangle .
\end{equation}
Here and further in this proof, angle brackets mean 
pairing of a distribution in variable $x'$ with
a test function.
Although the smooth function $e^{-(x - i x')^2/h}$ 
is not compactly supported in $x'$, 
the right hand side makes sense as the distribution 
$\partial_y G(\cdot,y)$ is compactly supported.
Clearly the derivatives of $\varphi$ can be represented in a similar way
\begin{equation} \label{dvphi}
  \partial^\alpha_x \partial^\beta_y \varphi(x,y) = \frac{1}{(\pi h)^m}
  \left\langle \partial^{\beta+1}_y G(x',y), 
  \partial^\alpha_x e^{-(x - i x')^2/h}\right\rangle .
\end{equation}
This representation and the fact that 
$\partial^2_{y} G(x,0) = 0$ implies the second equality 
in~\re{helmholtz_cauchy}. The first equality 
in~\re{helmholtz_cauchy} follows from 
the definition~\re{vphi} and the second condition 
in~\re{fund2}.

The function $\varphi(x,y)$ and its derivatives with respect 
to $x, y$ can be considered as analytic functions in complex variables $x_1, \ldots, x_{n-1}$, which follows from~\re{vphi}
and~\re{dvphi}. 
Function
\begin{align*}
  &\psi(x,y) = \varphi(i x,y)
  =\frac{1}{(\pi h)^m}
  \left\langle \partial_y G(x',y), e^{(x - x')^2/h}\right\rangle 
\end{align*}
is a convolution of $(\pi h)^{-m} e^{x^2/h}$ with 
$\partial_y G(x,y)$ with respect to $x$ and thus due to~\re{fund1} satisfies
the wave equation
\[
  \partial^2_{y} \psi - \Delta_x \psi = 0
\]
for all $(x,y)\in{\mathbb R}^n$.
For any fixed real $y$ the left hand side here is
an analytic function in $x_1,\ldots, x_{n-1}$.
Therefore, this equation is satisfied for all complex
$x_1, \ldots, x_{n-1}$, which yields
\begin{align*}
  (\partial^2_{y} \varphi + \Delta_x \varphi)(x,y)
  =(\partial^2_{y} \psi - \Delta_x \psi)(-ix,y) = 0.
\end{align*}
Thus equation~\re{helmholtz} is satisfied.
As is well known the solution $\varphi$
of problem~\re{helmholtz}, \re{helmholtz_cauchy} is unique.

Now turn to assertion~\re{vphilim}.
Since the distribution $\partial_y^\beta G(\cdot,y)$ is
supported in the ball $\{x \,|\, |x|\leqslant |y|\}$,
its pairing with a test function
$f$ is determined by restriction of $f$ to
any neighborhood of the specified ball.
Applying Fourier transform, one can easily obtain
the following estimate
\begin{equation}\label{Gf}
  \left|\left\langle \partial_y^\beta G(x',y), f(x')\right\rangle \right|
  \leqslant C_{y,\varepsilon} \max_{\scriptsize\begin{array}{c} |x'|^2\leqslant |y|^2+\varepsilon,\\ |\alpha|\leqslant n+\beta-1\end{array}} |\partial^\alpha f(x')|,
\end{equation}
where $\varepsilon>0$, and constant $C_{y,\varepsilon}$ is bounded whenever $y$ 
is bounded.
Suppose $|x| > |y|$.
Put $f(x') = (\pi h)^{-m} e^{-(x - i x')^2/h}$, 
$\varepsilon = (|x|^2 - |y|^2)/2$.
For $|x'|^2 \leqslant |y|^2+\varepsilon$ we have
\[
  |e^{-(x - i x')^2/h}|/h^m = e^{(x'^2 - x^2)/h}/h^m 
  \leqslant e^{(y^2 - x^2)/(2h)}/h^m \to 0, \quad h\to 0.
\]
The derivatives $\partial_{x'}^\alpha f(x')$ 
are treated the same way.
In view of~\re{vphi}, \re{Gf}, we obtain~\re{vphilim}
for the function $\varphi$.
The same assertion for the derivatives of $\varphi$
is proved by a similar argument using~\re{dvphi}
instead of~\re{vphi}.
\end{proof}

\section{Special solution of the wave equation}
\label{spec_wave}
The following lemma provides transformation of a solution
of Laplace equation to a solution 
of the wave equation~\re{waveq}.
\begin{lemma}\label{wave_lemma}
Suppose a $C^\infty$-smooth function $\varphi(x,y)$ in ${\mathbb R}^n$
satisfies the following relations
\begin{equation}\label{vphiprobl}
  \Delta \varphi = 0, \quad \partial_y\varphi|_{y=0} = 0.
\end{equation}
Then the function
\begin{equation}\label{wdef}
  w(x,y,t) = \frac{1}{\pi} \int_0^{\pi/2}
  \varphi\left(x, \sqrt{y^2 - t^2} \cdot \sin s\right) \,ds
\end{equation}
is $C^\infty$-smooth in the set
$x\in{\mathbb R}^{n-1}$, $t\in{\mathbb R}$, $y > |t|$, and satisfies the wave equation
\begin{gather}
  \partial^2_{t} w - \Delta w = 0. \label{w1} 
\end{gather}
\end{lemma}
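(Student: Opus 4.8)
The plan is to verify smoothness and the wave equation directly from the integral formula~\re{wdef}, using the change of variables $r = \sqrt{y^2-t^2}$ and exploiting that $\varphi$ is harmonic. First, note that on the set $\{y>|t|\}$ the quantity $y^2-t^2$ is positive, so $r>0$ is a smooth function of $(y,t)$ there; since $\varphi$ is $C^\infty$ on all of ${\mathbb R}^n$ and the integrand $\varphi(x, r\sin s)$ depends smoothly on $(x,r,s)$, differentiation under the integral sign is justified on any compact subset of $\{y>|t|\}$, giving that $w$ is $C^\infty$ there.

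Next I would reduce the PDE to a one-variable identity for the radial profile. Define, for $r>0$,
\[
  v(x,r) = \frac{1}{\pi}\int_0^{\pi/2} \varphi(x, r\sin s)\,ds,
\]
so that $w(x,y,t) = v(x,\sqrt{y^2-t^2})$. The key computation is to express $\partial_t^2 w - \Delta_x w - \partial_y^2 w$ in terms of $v$. Writing $r = \sqrt{y^2-t^2}$, the chain rule gives $\partial_y w = (y/r)\,v_r$ and $\partial_t w = -(t/r)\,v_r$, and after a second differentiation one finds
\[
  \partial_y^2 w - \partial_t^2 w = v_{rr} + \frac{1}{r}\,v_r,
\]
the cross terms involving $y^2-t^2$ combining to reproduce exactly the two-dimensional radial Laplacian in $r$. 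Meanwhile $\Delta_x w = (\Delta_x v)(x,r)$. Hence~\re{w1} is equivalent to showing
\[
  \Delta_x v + v_{rr} + \frac{1}{r} v_r = 0, \qquad r>0.
\]

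The heart of the matter is therefore verifying this identity for $v$, and here is where the hypotheses on $\varphi$ enter. Since $\Delta\varphi = \Delta_x\varphi + \partial_y^2\varphi = 0$, we have $\Delta_x\varphi(x,z) = -\partial_z^2\varphi(x,z)$; substituting $z = r\sin s$,
\[
  \Delta_x v(x,r) = -\frac{1}{\pi}\int_0^{\pi/2} (\partial_z^2\varphi)(x, r\sin s)\,ds.
\]
On the other hand, differentiating $v$ in $r$ twice and using $\partial_r[\varphi(x,r\sin s)] = \sin s\,(\partial_z\varphi)(x,r\sin s)$, I would compute $v_{rr} + \frac{1}{r}v_r$ and show it equals $\frac{1}{\pi}\int_0^{\pi/2}(\partial_z^2\varphi)(x,r\sin s)\,ds$, so that the two contributions cancel. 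The natural way to see this is integration by parts in $s$: the substitution $z = r\sin s$ turns the $s$-integral into a form where the operator $\partial_{rr} + r^{-1}\partial_r$ acting on $\int_0^{\pi/2}\varphi(x,r\sin s)\,ds$ can be matched, via a one-dimensional integration-by-parts identity, to $\int_0^{\pi/2}(\partial_z^2\varphi)(x,r\sin s)\,ds$; this is precisely the classical fact underlying the spherical-means / Euler--Poisson--Darboux reduction. The condition $\partial_y\varphi|_{y=0}=0$ is what kills the boundary term at $s=0$ (where $z=0$): without it the integration by parts would leave a term proportional to $(\partial_z\varphi)(x,0)/r$, obstructing the cancellation. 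At $s = \pi/2$ the boundary contributions cancel between the two sides automatically. I expect this integration-by-parts bookkeeping — tracking exactly which boundary terms survive and confirming the $\partial_y\varphi|_{y=0}=0$ hypothesis is used precisely once — to be the main obstacle; the chain-rule reduction of the PDE to the radial identity is routine.
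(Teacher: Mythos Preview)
Your approach is correct and essentially the same as the paper's: the paper computes $\partial_y^2 w$ and $\partial_t^2 w$ directly in the variables $(y,t)$, subtracts, and then performs exactly the integration by parts in $s$ you describe (writing $\sin s\,ds = -\,d(\cos s)$), with the boundary term at $s=0$ killed by $\partial_y\varphi|_{y=0}=0$ and the one at $s=\pi/2$ vanishing because $\cos(\pi/2)=0$. Your repackaging via $v(x,r)$ and the observation that $\partial_y^2 w - \partial_t^2 w = v_{rr} + r^{-1}v_r$ is a clean way to organize the same computation and makes the Euler--Poisson--Darboux structure explicit, but the substantive steps coincide.
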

\begin{proof}
We have
\begin{align*}&\pi \partial_y w = \frac{y}{\sqrt{y^2-t^2}} \int_0^{\pi/2} 
  (\partial_y \varphi)\left(x, \sqrt{y^2 - t^2} \cdot \sin s\right) \sin s\, ds,
\end{align*}
\begin{align*}&\pi \partial_t w = \frac{-t}{\sqrt{y^2-t^2}} \int_0^{\pi/2} 
  (\partial_y \varphi)\left(x, \sqrt{y^2 - t^2} \cdot \sin s\right) \sin s\, ds.
\end{align*}
Next
\begin{align*}
  &\pi \partial^2_{y} w = \frac{-t^2}{(y^2-t^2)^{3/2}} \int_0^{\pi/2} 
  (\partial_y \varphi)\left(x, \sqrt{y^2 - t^2} \cdot \sin s\right) \sin s \,ds\\
  & + \frac{y^2}{y^2-t^2} \int_0^{\pi/2} 
  (\partial^2_{y} \varphi)\left(x, \sqrt{y^2 - t^2} \cdot \sin s\right) (\sin s)^2\, ds,
\end{align*}
\begin{align*}
  &\pi \partial^2_{t} w = \frac{-y^2}{(y^2-t^2)^{3/2}} \int_0^{\pi/2} 
  (\partial_y \varphi)\left(x, \sqrt{y^2 - t^2} \cdot \sin s\right) \sin s \,ds\\
  & + \frac{t^2}{y^2-t^2} \int_0^{\pi/2} 
  (\partial^2_{y} \varphi)\left(x, \sqrt{y^2 - t^2} \cdot \sin s\right) (\sin s)^2\, ds.
\end{align*}
Whence
\begin{align*}
  &\pi (\partial^2_{t} - \partial^2_{y}) w = 
  \frac{-1}{\sqrt{y^2-t^2}} \int_0^{\pi/2} 
  (\partial_y \varphi)\left(x, \sqrt{y^2 - t^2} \cdot \sin s\right) \sin s \,ds\\
  & - \int_0^{\pi/2} 
  (\partial^2_{y} \varphi)\left(x, \sqrt{y^2 - t^2} \cdot \sin s\right) (\sin s)^2\, ds.
\end{align*}
The first term on the right hand side equals
\begin{align*}
  &\frac{1}{\sqrt{y^2-t^2}} \int_0^{\pi/2} 
  (\partial_y \varphi)\left(x, \sqrt{y^2 - t^2} \cdot \sin s\right) 
  \frac{d}{ds}(\cos s) \,ds\\
  &=-\frac{(\partial_y\varphi)(x, 0)}{\sqrt{y^2-t^2}}
  - \int_0^{\pi/2} 
  (\partial^2_{y} \varphi)\left(x, \sqrt{y^2 - t^2} \cdot \sin s\right) (\cos s)^2 \,ds.
\end{align*}
Now applying the second relation in~\re{vphiprobl}
we obtain
\begin{align*}
  &\pi (\partial^2_{t} - \partial^2_{y}) w = 
  -\int_0^{\pi/2} 
  (\partial^2_{y} \varphi)
  \left(x, \sqrt{y^2 - t^2} \cdot \sin s\right)\, ds.
\end{align*}
Adding this to
\begin{align*}
  &-\pi\Delta_x w = 
  -\int_0^{\pi/2} 
  (\Delta_x \varphi)\left(x, \sqrt{y^2 - t^2} \cdot \sin s\right)\, ds,
\end{align*}
we obtain~\re{w1} in view of the first relation 
in~\re{vphiprobl}.
\end{proof}

It can be easily seen from the definition~\re{wdef} that 
$w$, $\partial_x w$ have continuous extensions on 
the set $\{y\geqslant|t|\}$.
This is also true for
$\partial_y w$, $\partial_t w$, which follows from 
the expressions for these derivatives 
given in the beginning of the previous proof and
from the second condition in~\re{vphiprobl}.
From now on we denote by $w$, $\partial_{x,y,t} w$
these continuous extensions defined on $\{y\geqslant|t|\}$.
We have
\begin{equation}\label{w22}
  (\partial_y w \pm \partial_t w)\big|_{t = \pm y} = 0. 
\end{equation}

\begin{lemma}\label{lemmaw}
  Suppose $\varphi$ is the solution of 
  the Cauchy problem~\re{helmholtz}, \re{helmholtz_cauchy}.
  Then for $w(x,y,t)$ defined by~\re{wdef} we have
\begin{equation}
  w, \partial_{x,y} w \to 0, \quad h\to 0, \quad \textrm{if } 
  |x| > y\geqslant |t|;
  \label{wlim}
\end{equation}
the convergence is uniform on every compact set in
$\{|x|>y\geqslant|t|\}$.
\end{lemma}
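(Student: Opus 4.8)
The plan is to reduce the whole statement to Lemma~\ref{ell_lemma} by controlling the values of $\varphi$ and of its first and second derivatives at the points $\big(x,\sqrt{y^2-t^2}\,\sin s\big)$, $s\in[0,\pi/2]$, which enter the integral~\re{wdef} and the formulas for its derivatives. The key elementary observation is that, whenever $y\geqslant|t|$, the second coordinate $\sqrt{y^2-t^2}\,\sin s$ ranges over $[0,y]$; hence if $|x|>y$, then all these points lie in the open set $\{|x'|>|y'|\}$ on which Lemma~\ref{ell_lemma} guarantees convergence to $0$.

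First I would set up the compact sets. Let $Q$ be a compact subset of $\{|x|>y\geqslant|t|\}$. Since $(x,y,t)\mapsto|x|-y$ is continuous and positive on $Q$, there is $\delta>0$ with $|x|\geqslant y+\delta$ on $Q$; also $y$ and $x$ stay in bounded sets, so $0\leqslant y\leqslant M$ and $x\in X$ for some $M>0$ and some compact $X\subset{\mathbb R}^{n-1}$. Put $Q'=\{(x',y')\,|\,x'\in X,\ 0\leqslant y'\leqslant M,\ |x'|\geqslant y'+\delta\}$; then $Q'$ is compact and $Q'\subset\{|x'|>|y'|\}$, and for every $(x,y,t)\in Q$, every $s\in[0,\pi/2]$ and every $0\leqslant\tau\leqslant\sqrt{y^2-t^2}$ one has $\big(x,\sqrt{y^2-t^2}\,\sin s\big)\in Q'$ and $(x,\tau)\in Q'$, because $\sqrt{y^2-t^2}\,\sin s\leqslant\sqrt{y^2-t^2}\leqslant y$. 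By Lemma~\ref{ell_lemma}, the quantity $\mathcal M(h):=\sup_{Q'}\big(|\varphi|+|\partial_{x,y}\varphi|+|\partial^2_{x,y}\varphi|\big)$ tends to $0$ as $h\to0$.

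Then I would estimate the three quantities in~\re{wlim} one by one. From~\re{wdef}, $|w|\leqslant\frac1\pi\int_0^{\pi/2}|\varphi(x,\sqrt{y^2-t^2}\,\sin s)|\,ds\leqslant\frac12\mathcal M(h)$ on $Q$; differentiating~\re{wdef} in $x_j$ under the integral sign gives in the same way $|\partial_{x_j}w|\leqslant\frac12\mathcal M(h)$ (on the part $\{y=|t|\}$ of $Q$ these identities are read via the continuous extensions discussed after Lemma~\ref{wave_lemma}). For $\partial_y w$ I would start from the expression $\pi\partial_y w=\tfrac{y}{\sqrt{y^2-t^2}}\int_0^{\pi/2}(\partial_y\varphi)(x,\sqrt{y^2-t^2}\,\sin s)\,\sin s\,ds$ obtained in the proof of Lemma~\ref{wave_lemma} (valid for $y>|t|$), and use the second condition in~\re{helmholtz_cauchy}, i.e. $\partial_y\varphi|_{y=0}=0$, to write $(\partial_y\varphi)(x,y')=y'\int_0^1(\partial^2_y\varphi)(x,\rho y')\,d\rho$. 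The factor $\sqrt{y^2-t^2}$ then cancels the singular prefactor and one obtains
\[
  \partial_y w(x,y,t)=\frac{y}{\pi}\int_0^{\pi/2}\!\!\int_0^1(\partial^2_y\varphi)\big(x,\rho\sqrt{y^2-t^2}\,\sin s\big)(\sin s)^2\,d\rho\,ds ,
\]
valid for $y\geqslant|t|$ (the right-hand side is continuous up to $\{y=|t|\}$, where it equals $\tfrac14\,y\,(\partial^2_y\varphi)(x,0)$, so it coincides with the continuous extension of $\partial_y w$). Hence $|\partial_y w|\leqslant\tfrac{y}{4}\sup_{Q'}|\partial^2_y\varphi|\leqslant\tfrac{M}{4}\mathcal M(h)$ on $Q$. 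Combining the three bounds yields~\re{wlim}, uniformly on $Q$.

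The main obstacle is the singular prefactor $y/\sqrt{y^2-t^2}$ in the formula for $\partial_y w$, which is unbounded near the part $\{y=|t|\}$ of the region in~\re{wlim}, so the crude bound $|\partial_y\varphi|\leqslant\mathcal M(h)$ does not suffice there; the resolution is to exploit that $\partial_y\varphi$ vanishes on $\{y=0\}$, which (after a Taylor expansion in the $y$-variable) supplies an extra factor $\sqrt{y^2-t^2}$ and lets one trade $\partial_y\varphi$ for $\partial^2_y\varphi$, whose uniform convergence to $0$ on $Q'$ is again granted by Lemma~\ref{ell_lemma}. Everything else is routine differentiation under the integral sign together with the geometric containment $\big(x,\sqrt{y^2-t^2}\,\sin s\big)\in Q'$.
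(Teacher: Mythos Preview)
Your proof is correct and follows essentially the same approach as the paper: the cases $w$ and $\partial_x w$ are handled directly from~\re{wdef} and~\re{vphilim}, while for $\partial_y w$ the singular prefactor $y/\sqrt{y^2-t^2}$ is cancelled by using $\partial_y\varphi|_{y=0}=0$ to trade $\partial_y\varphi$ for $\partial^2_y\varphi$ and gain a factor $\sqrt{y^2-t^2}$. The only cosmetic difference is that you use the integral Taylor remainder $(\partial_y\varphi)(x,y')=y'\int_0^1(\partial_y^2\varphi)(x,\rho y')\,d\rho$, whereas the paper uses the equivalent mean-value bound $|(\partial_y\varphi)(x,\sqrt{y^2-t^2}\sin s)|\leqslant\sqrt{y^2-t^2}\max_{\tau\in[0,\sqrt{y^2-t^2}]}|(\partial_y^2\varphi)(x,\tau)|$.
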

\begin{proof}
For $w$ and $\partial_x w$ assertion~\re{wlim} follows 
from~\re{vphilim} and formula~\re{wdef}.
To estimate $\partial_y w$, observe that due to
$\partial_y\varphi|_{y=0}=0$ we have
\begin{align*}
  \left|(\partial_y \varphi)\left(x, \sqrt{y^2 - t^2} \cdot \sin s\right)\right|
  \leqslant \sqrt{y^2-t^2}
  \max_{\tau\in[0,\sqrt{y^2-t^2}]}|(\partial^2_{y}\varphi)(x,\tau)|.
\end{align*}
Now applying~\re{vphilim} and formula for $\partial_y w$
given in the beginning of the proof 
of Lemma~\ref{wave_lemma}, we obtain~\re{wlim} for $\partial_y w$.
\end{proof}

Note that the condition $|x| > y\geqslant|t|$ in~\re{wlim} can be
weakened to $\sqrt{x^2+t^2} > y\geqslant |t|$ (although we will not use
this generalization).
Note also that if $\sqrt{x^2+t^2} < y$, then generally $w$
grows as $h\to 0$.

\section{Determination of the solution $u$} \label{determ}
In this section we will obtain our main result,
that is, formula~\re{formula}, which relates
$u(x^*,y^*,t^*)$ to
the Cauchy data on the set~\re{Cauchy_set}.

\begin{theorem}\label{T}
Under the conditions indicated in sec.~\ref{intro},
for any solution $u\in C^2(\overline\Omega\times{\mathbb R})$
of the wave equation~\re{waveq} we have
\begin{align}
  &u(x^*,y^*,t^*) = \notag\\
  &= \frac{1}{2} \sum_\pm u\left(x^*,Y(x^*), T_\pm(x^*)\right) 
  +\lim_{h\to 0} 
  \int_S d\sigma_{x,y} \int_{T_-(x)}^{T_+(x)} (u\, \partial_\nu w^* - \partial_\nu u\cdot w^* )\, dt,
    \label{formula}
\end{align}
where $w^*(x,y,t) = w(x-x^*,y-y^*,t-t^*)$,
the function $w$ being defined in Lemma~\ref{lemmaw},
$d\sigma$ is the surface measure on $\partial\Omega$.
\end{theorem}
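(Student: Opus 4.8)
The plan is to obtain~\re{formula} from the second Green identity for the wave operator $\partial^2_t-\Delta$, applied to the pair $u$, $w^*$ over a suitable bounded space--time region, and then to pass to the limit $h\to0$. Write $w^*(x,y,t)=w(x-x^*,y-y^*,t-t^*)$; by translation invariance and Lemma~\ref{wave_lemma}, $w^*$ is $C^\infty$ and satisfies~\re{waveq} on $\{y-y^*>|t-t^*|\}$, and together with $\partial_{x,y,t}w^*$ it extends continuously to $\{y-y^*\geqslant|t-t^*|\}$, where by~\re{w22} one has $(\partial_yw^*\pm\partial_tw^*)|_{t-t^*=\pm(y-y^*)}=0$. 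Let $\Pi=\{(x,y,t):(x,y)\in\Omega,\ |t-t^*|\leqslant y-y^*\}$ and fix $R$ so large that $\overline S$ and $K\cap\overline\Omega$ lie in $\{|x|<R\}$ and, using the growth condition $C_2<1$ in~\re{subgraph}, so that $|x-x^*|>y-y^*$ on $\{|x|=R\}\cap\overline\Pi$; put $\Pi_R=\Pi\cap\{|x|<R\}$, a bounded Lipschitz region. Integrating the identity $u(\partial^2_t-\Delta)w^*-w^*(\partial^2_t-\Delta)u=\partial_t(u\,\partial_tw^*-w^*\partial_tu)-\operatorname{div}_{x,y}(u\,\nabla_{x,y}w^*-w^*\nabla_{x,y}u)$ over $\Pi_R$, whose left-hand side vanishes in the interior, the divergence theorem gives that the total flux of $(-(u\nabla_{x,y}w^*-w^*\nabla_{x,y}u),\,u\partial_tw^*-w^*\partial_tu)$ through $\partial\Pi_R$ is zero for every $h>0$. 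Up to edges of codimension two, $\partial\Pi_R$ consists of the lateral face $\Sigma_{\mathrm{lat}}\subset\partial\Omega\times{\mathbb R}$, the two characteristic faces $\Sigma_\pm=\{(x,y)\in\Omega,\ y>y^*,\ t-t^*=\pm(y-y^*)\}\cap\{|x|<R\}$, and the outer face $\Sigma_R=\{|x|=R\}\cap\Pi$.

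On $\Sigma_R$, which lies in $\{|x-x^*|>y-y^*\geqslant|t-t^*|\}$ and is compact, Lemma~\ref{lemmaw} gives $w^*,\partial_{x,y}w^*\to0$ uniformly as $h\to0$, so this face contributes nothing in the limit. On $\Sigma_\pm$ the outward unit normal is $\tfrac1{\sqrt2}(0_x,-1,\pm1)$ and the surface measure is $\sqrt2\,dx\,dy$, so the corresponding flux equals $\int_{\{(x,y)\in\Omega,\,y>y^*,\,|x|<R\}}[u(\partial_yw^*\pm\partial_tw^*)-w^*(\partial_yu\pm\partial_tu)]\,dx\,dy$ with the functions evaluated at $t=t^*\pm(y-y^*)$; the first summand vanishes by~\re{w22}. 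The key simplification is that on $\Sigma_\pm$ we have $\sqrt{(y-y^*)^2-(t-t^*)^2}=0$, so by~\re{wdef} and~\re{helmholtz_cauchy}, $w^*=\tfrac12\varphi(x-x^*,0)=\tfrac12(\pi h)^{-m}e^{-(x-x^*)^2/h}$ there: a Gaussian in $x$ of total mass $\tfrac12$ that concentrates at $x=x^*$ as $h\to0$. Since $(\partial_yu\pm\partial_tu)(x^*,y,t^*\pm(y-y^*))=\tfrac{d}{dy}u(x^*,y,t^*\pm(y-y^*))$, the face $\Sigma_\pm$ contributes, in the limit,
\[
  -\tfrac12\int_{y^*}^{Y(x^*)}\frac{d}{dy}u\bigl(x^*,y,t^*\pm(y-y^*)\bigr)\,dy
  =-\tfrac12\Bigl(u\bigl(x^*,Y(x^*),T_\pm(x^*)\bigr)-u(x^*,y^*,t^*)\Bigr),
\]
which is where the term $u(x^*,y^*,t^*)$ of~\re{formula} appears.

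On $\Sigma_{\mathrm{lat}}$ the outward normal is $(\nu,0)$ and the surface measure is $d\sigma_{x,y}\,dt$, so this face contributes $-\int_{\{Y(x)>y^*,\,|x|<R\}}d\sigma_{x,y}\int_{T_-(x)}^{T_+(x)}(u\,\partial_\nu w^*-\partial_\nu u\cdot w^*)\,dt$. Because $S\supset K\cap\partial\Omega$, on $\partial\Omega\setminus S$ one has $|x-x^*|>Y(x)-y^*=y-y^*\geqslant|t-t^*|$, i.e. the localization regime of Lemma~\ref{lemmaw}; hence the part of this integral over the fixed compact set $\{Y(x)>y^*,\,|x|<R\}\setminus S$ tends to $0$ as $h\to0$, so in the limit $\Sigma_{\mathrm{lat}}$ contributes $-\lim_{h\to0}\int_S d\sigma_{x,y}\int_{T_-(x)}^{T_+(x)}(u\,\partial_\nu w^*-\partial_\nu u\cdot w^*)\,dt$. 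Adding the contributions of $\Sigma_R$, $\Sigma_\pm$, $\Sigma_{\mathrm{lat}}$, equating the sum to $0$, and solving for $u(x^*,y^*,t^*)$ gives~\re{formula}; in particular the limit over $S$ exists because the other limits do, and the outcome is independent of $R$.

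I expect the substantive work to lie in the two limiting arguments on the characteristic and lateral faces — recognizing that $w^*$ degenerates to a plain concentrating Gaussian on $\Sigma_\pm$ (which is precisely what converts the pointwise relation~\re{w22} into the boundary values $u(x^*,Y(x^*),T_\pm(x^*))$), and using Lemma~\ref{lemmaw} to discard the part of the lateral integral outside $S$. One technical point: $w^*$ is a priori only $C^1$ up to $\Sigma_\pm$, so the divergence theorem should first be applied on $\Pi_R\cap\{y-y^*\geqslant|t-t^*|+\delta\}$ and $\delta\to0$ taken afterwards, the limiting flux through $\Sigma_\pm$ being controlled by the continuity of $w^*,\partial_{x,y,t}w^*$ on $\{y-y^*\geqslant|t-t^*|\}$ and the vanishing of $\partial_yw^*\pm\partial_tw^*$ there; the uniform convergence of the Gaussian mollifiers and of Lemma~\ref{lemmaw} on the relevant compact sets then makes the interchange of $\lim_{h\to0}$ with the face integrals routine.
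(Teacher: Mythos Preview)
Your argument follows the same architecture as the paper's: Green's identity for the wave operator over a wedge-shaped region bounded by $\partial\Omega\times\mathbb{R}$, the two characteristic hyperplanes $t-t^*=\pm(y-y^*)$, and a far cutoff in $x$; then the characteristic faces are handled via~\re{w22} and the Gaussian collapse of $w^*$, the lateral face is trimmed to $S$ by Lemma~\ref{lemmaw}, and the limit $h\to0$ is taken. The one structural difference is that the paper multiplies $u$ by a spatial cutoff $\chi$ equal to $1$ near $K\cap\overline\Omega$: this makes the integrand vanish identically on the far face $\{|x|=R\}$, at the price of a nonzero volume term $w\cdot(\partial_t^2-\Delta)(\chi u)$ supported where $\nabla\chi\neq0$, which is outside $K$ and hence killed by Lemma~\ref{lemmaw} in the limit. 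You instead keep $u$ uncut, so the volume term vanishes exactly, and you use Lemma~\ref{lemmaw} on the far face $\Sigma_R$ itself. Both routes are legitimate.

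There is, however, a genuine gap. The hypotheses in sec.~\ref{intro} only guarantee $Y(x)>y^*$ for $(x,Y(x))\in\overline S$; away from $S$ the graph may dip to or below $y^*$. In that case your region $\Pi_R$ need not be Lipschitz: wherever $Y(x)=y^*$ with $|x|<R$ the three bounding hypersurfaces $y=Y(x)$ and $t-t^*=\pm(y-y^*)$ meet at a single point and $\Pi_R$ pinches, so the divergence theorem as stated does not apply, and your $\delta$-shift of the characteristic faces does not remove this pinch. The paper deals with this by a two-step reduction: it first proves~\re{formula} under the extra assumption $Y>y^*$ everywhere (where the region is genuinely Lipschitz, via the diffeomorphism $(x,y,t)\mapsto(x,y/Y(x),t/Y(x))$), and then removes that assumption by replacing $Y$ outside the $x$-projection $X$ of $\overline S$ by a bounded positive $\widetilde Y$ and using the cutoff $\chi$ (supported over $X$) to extend $\chi u$ by zero to the modified domain $\widetilde\Omega\times\mathbb{R}$. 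Your cutoff-free variant does not supply an analogue of this step, so as written it proves the theorem only under the additional hypothesis $Y(x)>y^*$ for all $|x|\leqslant R$.
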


From the remarks given in 
the end of sec.~\ref{spec_wave},
it follows that $w^*$ grows in the cone $K$ as $h\to 0$.
Hence the integrand on the right hand side of~\re{formula}
grows, which means that
if we plug there arbitrary smooth functions 
instead of the Cauchy data $u$, $\partial_\nu u$,
the limit of the integral generally does not exist.
So if the Cauchy data are given with some error (which is always the case in practice), one should approximate this limit
by the corresponding integral computed
for some positive $h$.
In fact, the optimal value of $h$ depends on the accuracy of 
the Cauchy data.
This issue arises in other problems that require a regularization including the analytic continuation problem.

\begin{proof}[Proof of Theorem~\ref{T}]
We suppose that the coordinates are chosen in such a way that
$(x^*,y^*,t^*) = (0,0,0)$.
In this case we have $T_\pm(x) = \pm Y(x)$.
Our condition $Y(x)>y^*$, $(x,y)\in \overline S$ 
imposed on $S$ in sec.~\ref{intro}
 now reads $Y(x)>0$, $(x,y)\in \overline S$.
First we prove~\re{formula} assuming that $Y(x)>0$ for all
$x\in{\mathbb R}^{n-1}$. 
After that we will eliminate this restriction.

Note that due to the growth condition in~\re{subgraph},
the set $K_\Omega = K\cap\overline\Omega$ is compact.
Hence there exists a compactly supported $C^\infty$-smooth function $\chi(x,y)$
in ${\mathbb R}^n$ such that $\chi=1$ in some neighborhood of $K_\Omega$.
Pick a number $R$ such that
the projection of ${\rm supp}\chi$
on the hyperplane $(x_1,\ldots,x_{n-1})$
is contained in the ball $\{|x|<R\}$.
Put
\[
  V = \{(x,y,t)\,|\,\, |x|<R,\, |t|<y<Y(x)\} \subset \Omega\times{\mathbb R}.
\]
The set $V$ is a bounded Lipschitz domain in ${\mathbb R}^{n+1}$.
Indeed, the diffeomorphism
\[
  (x,y,t) \mapsto (x, y/Y(x), t/Y(x))
\]
is well-defined in a neighborhood of $\overline V$, since
$Y(x)$ is separated from zero for bounded $x$.
It remains to observe that the specified diffeomorphism
maps $V$ to the Cartesian product $\{x\,|\,\,|x|<R\}\times \{(y,t)\,|\,\,|t|<y<1\}$ of Lipschitz domains.

Put $\tilde u = \chi u$.
We have
\begin{align}
  &\int_V \left[ w\cdot 
    (\partial^2_{t} - \Delta) \tilde u -
  \tilde u\cdot (\partial^2_{t} - \Delta) w\right] dx dy dt 
  \notag\\
  &= \int_{\partial V} \left[ (\tilde u\, \partial_x w
    -w\, \partial_x \tilde u) \nu_x + (\tilde u\, \partial_y w
    -w\, \partial_y \tilde u) \nu_y + (-\tilde u\, \partial_t w 
    +w\, \partial_t \tilde u) \nu_t \right] d\gamma. \label{surfint}
\end{align}
Here $\nu=(\nu_x,\nu_y,\nu_t)$
is the outward unit normal to $\partial V$,
$\nu_x=(\nu_{x_1},\ldots,\nu_{x_{n-1}})$,
$d\gamma$ is the surface measure on $\partial V$.
This formula is applicable provided the following conditions
are satisfied:
{\it i})~the functions $\tilde u$ and $w$ are $C^2$-smooth 
in $V$;
{\it ii})~the integral on the left hand side is absolutely convergent;
{\it iii})~the functions $\tilde u$, $w$ and their first order derivatives
have continuous extensions from $V$ to the closure $\overline V$
(the right hand side involves the values of 
these extensions on $\partial V$).
The condition ({\it i}) is obviously satisfied.
The condition ({\it ii}) follows from~\re{w1} and the fact that
$w$ is bounded in $V$, while 
$\tilde u$ is $C^2$-smooth in $\overline V$.
The last condition ({\it iii}) is obvious for $\tilde u$;
due to the remark given right after 
the proof of Lemma~\ref{wave_lemma},
({\it iii}) is satisfied for $w$.

Define the set
$\Gamma$ as the intersection of $\partial V$ with $\partial\Omega\times{\mathbb R}$,
while the sets $\Gamma^\pm$ are defined as the intersections of $\partial V$
with the hyperplanes $\{\pm t = y\}$.
For any point $(x,y,t)$ from $\partial V \setminus (\Gamma\cup \Gamma^+\cup \Gamma^-)$ we have $|x|=R$.
Due to our choice of $R$, in such points we have $\chi=0$,
and so $\tilde u=0$.
Therefore, the integral over $\partial V$ in~\re{surfint}
equals the sum of the corresponding integrals
over $\Gamma$, $\Gamma^+$, $\Gamma^-$.
The integral over $\Gamma^+$ equals
\begin{align}
  &\int_{\Gamma^+} \left[ (\tilde u\, \partial_y w
    -w\, \partial_y \tilde u) \nu_y + (-\tilde u\, \partial_t w 
    +w\, \partial_t \tilde u) \nu_t \right] d\gamma \notag\\
  &= \frac{1}{\sqrt{2}}\int_{\Gamma^+} \left[ 
    w\, (\partial_y \tilde u + \partial_t \tilde u)
    -\tilde u\, (\partial_y w + \partial_t w)
     \right]\, d\gamma \label{S+int}
\end{align}
(on $\Gamma^+$ we have $\nu_x=0$, $\nu_t = -\nu_y = 1/\sqrt{2}$).
Due to~\re{w22}, we have
$\partial_y w + \partial_t w = 0$ on $\Gamma^+$.
Now taking into account the equality 
$w(x,y,\pm y) = \varphi(x,0)/2$, which follows from the definition~\re{wdef},
previously obtained integral equals
\begin{align*}
  &\frac{1}{2}
  \int_{|x|<R} dx\, \varphi(x,0)
  \int_{0}^{Y(x)} (\partial_y \tilde u + \partial_t \tilde u)(x,y,y)\, dy \\
  &= \frac{1}{2}\int_{|x|<R} dx\, \varphi(x,0) 
  \int_{0}^{Y(x)} \partial_y(\tilde u(x,y,y))\, dy \\
  &= \frac{1}{2} \int_{|x|<R} \varphi(x,0)
  \left[ \tilde u(x, Y(x), Y(x)) - \tilde u(x, 0, 0) \right] dx.
\end{align*}
In the same way, we obtain that the integral~\re{S+int},
in which $\Gamma^+$ is replaced by $\Gamma^-$, equals 
\[
  \frac{1}{2} \int_{|x|<R} \varphi(x,0)
  \left[ \tilde u(x, Y(x), -Y(x)) - \tilde u(x, 0, 0) \right] dx.
\]
We conclude that the integral on the right hand side
of~\re{surfint} equals
\begin{align}
  &\int_{|x|<R} \varphi(x,0)
  \left[ \frac{1}{2} \tilde u(x, Y(x), Y(x)) + 
    \frac{1}{2} \tilde u(x, Y(x), -Y(x)) - \tilde u(x, 0, 0) \right] dx \notag\\
  &+ \int_\Gamma (\tilde u\, \partial_\nu w - \partial_\nu \tilde u \cdot w)\, 
  d\gamma. \label{rhslim}
\end{align}
Now we pass to a limit $h\to 0$ in~\re{surfint}.
The left hand side tends to zero.
Indeed, the second term of the integrand vanishes 
in view of~\re{w1}.
The function $(\partial^2_{t} - \Delta) \tilde u$
is nonzero only if 
$\partial_{x,y}\chi\ne 0$. 
However, the corresponding set of points $(x,y)$
is separated from $K_\Omega$ due to our choice of $\chi$.
In combination with~\re{wlim}, this gives
that the first term of the integrand tends to zero 
as $h\to 0$.

As it was previously shown, 
the right hand side of~\re{surfint}
equals the expression~\re{rhslim}.
Due to the first equality in~\re{helmholtz_cauchy},
$\varphi(x,0)$ is the Gaussian distribution in ${\mathbb R}^{n-1}$,
which tends to $\delta(x)$ as $h\to 0$.
Thus the first integral in~\re{rhslim}
tends to
\begin{align*}
  \frac{1}{2} \sum_\pm \tilde u(0, Y(0), \pm Y(0))
  - \tilde u(0, 0, 0)
    = \frac{1}{2} \sum_\pm u(0, Y(0), \pm Y(0))
    - u(0, 0, 0)
\end{align*}
as $h\to 0$
(we used the fact that $\chi|_{K_\Omega} = 1$).
It follows from the definition of the hypersurface $\Gamma$,
that the integral over $\Gamma$ in~\re{rhslim} equals
\[
  \int_{S_R} d\sigma_{x,y} \int_{-Y(x)}^{Y(x)} (\tilde u\, \partial_\nu w - 
  \partial_\nu \tilde u\cdot w)\, dt,
\]
where $S_R = \{(x,y) \,|\, |x|<R,\, y=Y(x)\} \subset\partial\Omega$.
Thus passing to a limit $h\to 0$ in~\re{surfint} yields
\begin{align*}
  u(0,0,0) = 
  \frac{1}{2} \sum_\pm u\left(0, Y(0), \pm Y(0)\right) 
  +\lim_{h\to 0} 
  \int_{S_R} d\sigma_{x,y} \int_{-Y(x)}^{Y(x)} (\tilde u\, \partial_\nu w - 
  \partial_\nu \tilde u\cdot w)\, dt.
\end{align*}
The integral over $S_R$ can be replaced by that
over $S$. This follows from~\re{wlim} and from 
the fact that the set
$S_R\setminus S$ is separated from $K_\Omega$.
Analogously, $\tilde u$ can be replaced by $u$,
since the set of points, in which $\chi\ne 1$,
is separated from $K_\Omega$.
Thus~\re{formula} is proved in case $Y(x)>0$, $x\in{\mathbb R}^{n-1}$.
Next we turn to the general case when $Y(x)$ is assumed to be positive only if $(x,y)\in \overline S$.

Let $X$ be the projection of the closure $\overline S$
on the hyperplane $(x_1,\ldots,x_{n-1})$.
Thus $Y(x)>0$ for $x\in X$, which implies that
$Y$ is positive in some neighborhood of $X$.
Hence there exists
a $C^\infty$-smooth function $\widetilde Y$ in ${\mathbb R}^{n-1}$ 
satisfying
$\widetilde Y|_X = Y|_X$ and
$0 < \widetilde Y(x) \leqslant C$ for $x\in {\mathbb R}^{n-1}$.
Now we introduce the domain
\[
  \widetilde\Omega = \{(x,y)\,|\,\, x\in {\mathbb R}^{n-1}, \, -\infty<y<\widetilde Y(x)\}.
\]
Despite the fact that $\widetilde Y$ is everywhere positive, 
the preceding proof of~\re{formula} can not be applied directly to
the domain $\widetilde\Omega$, since generally the solution $u$ is not defined in $\widetilde\Omega\times{\mathbb R}$.
However, this issue is handled by a proper choice of the function $\chi$ introduced in the beginning of the proof.
It is possible to take $\chi$ such that
$\chi=1$ in some neighborhood of $K_\Omega$ and $\chi(x,y) = 0$ whenever $x\notin X$,
since the projection of $K_\Omega$ 
on the hyperplane $(x_1,\ldots,x_{n-1})$ is contained in the interior of $X$.
Then the product $\tilde u = \chi u$ is well-defined for 
$x\in X$, $y<\widetilde Y(x)=Y(x)$, $t\in{\mathbb R}$, 
and can be smoothly continued by zero to the remainder of $\widetilde\Omega\times{\mathbb R}$. After that the preceding proof of~\re{formula} goes through.
\end{proof}

The proof of the preceding theorem does not require the solution $u$
to be defined everywhere in $\Omega\times{\mathbb R}$.
Suppose that the wave equation~\re{waveq} is satisfied
for $t\in{\mathbb R}$, and $(x,y)\in\Omega\setminus\omega$,
where $\omega$ is some relatively closed subset of $\Omega$.
Then formula~\re{formula} holds true provided the cone
$K$ is separated from $\omega$ (which implies, in particular, that 
$(x^*,y^*)\in\Omega\setminus\omega$).
The function $\chi$ in the proof
should be chosen so that it satisfies $\chi = 0$
in a neighborhood of $\omega$.
Then the product $\chi u$ can be smoothly continued by zero
to $\omega\times{\mathbb R}$ and thus it
 can be viewed as
a function defined everywhere in $\Omega\times{\mathbb R}$.
After that formula~\re{formula} is derived 
by the same argument.

\end{document}